\documentclass[11pt,letterpaper]{article}
\usepackage{amsmath,amssymb,amsthm}
\numberwithin{equation}{section}
\usepackage{MnSymbol} 
\usepackage{bm, bbm} %
\usepackage{graphicx,color}
\usepackage[hyphens]{url}
\usepackage{dsfont}
\usepackage{booktabs, makecell, adjustbox}
\usepackage[normalem]{ulem}
\usepackage{mathtools}
\usepackage{thmtools}
\usepackage[hidelinks,colorlinks=true,linkcolor=blue,citecolor=blue]{hyperref}
\usepackage[nameinlink,capitalize]{cleveref}
\usepackage{multirow}
\usepackage{algorithm}
\usepackage[noend]{algpseudocode}
\usepackage{tikz} %
\usepackage{pifont}
\usepackage{physics}
\usepackage{enumitem}

\usepackage{enumitem}

\usepackage{soul} %
\usepackage[square,sort,numbers]{natbib}

\usepackage{subcaption}

\usepackage{tabularx}
\newcolumntype{L}{>{\raggedright\arraybackslash}X}

\newtheorem{theorem}{Theorem}[section]

\newtheorem{lemma}[theorem]{Lemma}
\newtheorem{remark}[theorem]{Remark}

\newcommand{\R}{\mathbb{R}}

\renewcommand{\epsilon}{\varepsilon}

\crefname{section}{\mbox{\S\!\!}}{\mbox{\S\!\!}}

\usepackage[margin=1in]{geometry}

\makeatletter
\def\blfootnote{\gdef\@thefnmark{}\@footnotetext}
\makeatother

\title{Acyclic Monotone Operators Are Not Closed Under Addition}

	 \author{
	 	Henry Shugart
	 	\\	UPenn \\	\texttt{hshugart@upenn.edu}
	 }

\begin{document}

\maketitle

\begin{abstract}
    Borwein and Wiersma [SIAM J. Optim. 18(3) (2007), 946–960]
    asked if the set of acyclic monotone operators is closed under addition. We answer this question in the negative.
\end{abstract}

\section{Introduction}\label{sec:intro}

Recall that a single-valued operator $F$ is \textit{monotone} if $\langle F(z) - F(z'), z -z '\rangle \geq 0$ for every $z,z'$ in the domain of $F$. Further, a monotone operator $F$ is \emph{acyclic} if, for every representation $F = \partial g + S$, with $g$ convex and $S$ monotone, $\partial g$ is constant (i.e., $g$ is affine). Acyclicity plays a fundamental role in the study of monotone operators due to a classical result of~\citet{asplund1970monotone}, which states that every single-valued maximally monotone operator admits a decomposition
\begin{equation*}
F = \nabla g + S,
\end{equation*}
where $g$ is convex, $S$ is an acyclic monotone operator, and this decomposition is unique up to an additive constant in $\nabla g$ and $S$. This has motivated the study of acyclic monotone operators.

A question raised by~\citet[p.\,960]{borwein2007asplund} is whether the set of acyclic monotone operators is closed under addition. This holds true if (at least) one of the two summed acyclic monotone operators is linear \citep{borwein2007asplund}. However, in this note, we answer this question in the negative in general. We do this by constructing two (nonlinear) acyclic monotone operators whose sum is the gradient of a nonaffine convex function. Because the set of acyclic monotone operators is closed under nonnegative scalings, this result can be equivalently interpreted as showing that the set of acyclic monotone operators is not convex.

\section{Preliminaries}\label{sec:prelim}

Here we briefly recall three helpful facts about monotone operators that we use below. Proofs of these facts can be respectively found, e.g., in~\citep[Theorem 1]{Rockafellar_1970} ,\citep[Proposition 17.7]{bauschke2017convex}, and~\citep[Proposition 1.3]{bianchi2003pseudomonotone}. For the first fact, recall that a monotone operator is said to be maximal if it has no proper monotone extension; see, e.g., the textbooks~\citep{rockafellar1997convex,bauschke2017convex} for further background.

\begin{lemma}[Saddle-operators of convex-concave functions are monotone]\label{lem:saddle-operator}
If $f(x,y)$ is convex-concave, then the saddle-operator $(\partial_x f, -\partial_y f)$ is maximally monotone.
\end{lemma}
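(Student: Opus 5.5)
Monotonicity follows from the convex and concave subgradient inequalities; maximality is the real content, and I would handle it through Minty's characterization.

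\textbf{Monotonicity.} Write $z=(x,y)$ and $T(z)=\partial_x f(x,y)\times(-\partial_y f(x,y))$. Here $\partial_y f$ is understood as the superdifferential of the concave function $f(x,\cdot)$. Take $(u,-v)\in T(x,y)$ and $(u',-v')\in T(x',y')$. Convexity of $f(\cdot,y)$ gives $f(x',y)\ge f(x,y)+\langle u,x'-x\rangle$. Concavity of $f(x,\cdot)$ gives $f(x,y')\le f(x,y)+\langle v,y'-y\rangle$. Write the two analogous inequalities at $(x',y')$, swapping primed and unprimed points. Adding the four inequalities, the cross terms $f(x',y)$ and $f(x,y')$ cancel, and we are left with
\[
\langle u-u',x-x'\rangle+\langle -v+v',y-y'\rangle\ge 0 ,
\]
which is monotonicity of $T$.

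\textbf{Maximality.} I would use Minty's theorem: a monotone operator on a Hilbert space is maximal iff $\operatorname{ran}(I+T)$ is everything. Since the paper works with operators on finite-dimensional spaces where $f$ is finite, I take $f$ to be finite on all of $\mathbb{R}^n\times\mathbb{R}^m$. Given a target $(a,b)$, I need $(x,y)$ with
\[
a-x\in\partial_x f(x,y),\qquad b-y\in -\partial_y f(x,y).
\]
These are exactly the saddle-point conditions for the perturbed function
\[
\tilde f(x,y)=f(x,y)+\tfrac12\|x-a\|^2-\tfrac12\|y-b\|^2 .
\]
One checks the signs: $0\in\partial_x\tilde f$ gives $a-x\in\partial_x f$, and $0\in\partial_y\tilde f$ gives $y-b\in\partial_y f$, i.e.\ $b-y\in-\partial_y f$. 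The function $\tilde f$ is strongly convex in $x$ and strongly concave in $y$, and it is continuous because it is finite and convex-concave.

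\textbf{Main obstacle: existence of a saddle point of $\tilde f$.} I would apply a minimax theorem such as Sion's on large compact balls and then use coercivity to remove the compactness. Concretely, strong concavity makes $\inf_x\tilde f(x,y)\to-\infty$ as $\|y\|\to\infty$. Likewise, strong convexity makes $\sup_y\tilde f(x,y)\to+\infty$ as $\|x\|\to\infty$. The sharper statement needed is that the saddle point of $\tilde f$ on $B_R\times B_R$ stays in a bounded set independent of $R$, so it is interior for large $R$ and hence a global saddle point. To get this uniform bound, I would combine strong monotonicity of $T+(z-(a,b))$ with the monotonicity inequality above, comparing against a fixed point $z_0$ and a fixed element of $T(z_0)$. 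This yields an a priori bound $\|z-z_0\|\le C$.

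Alternatively, one can skip the minimax theorem and cite Rockafellar's theorem that the saddle operator of a closed proper convex-concave function is maximal monotone. Since the paper allows citing \citep[Theorem 1]{Rockafellar_1970}, that would be the route I actually take, with the argument above as the self-contained sketch.
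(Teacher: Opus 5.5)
Your proposal is correct but takes a different route from the paper. The paper gives no proof of this lemma; it only cites Rockafellar's Theorem 1, which is your fallback. Your self-contained argument (four subgradient inequalities for monotonicity; Minty plus a saddle point of the strongly convex--concave perturbation $\tilde f$ for maximality) works for finite-valued $f$.

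The step you only sketched does close, with one detail worth making explicit. The saddle point $z_R$ of $\tilde f$ on $B_R\times B_R$ satisfies $0\in w_R+z_R-c+N_{B_R\times B_R}(z_R)$ for some $w_R\in T(z_R)$, where $c=(a,b)$, rather than the unconstrained inclusion. Test the normal-cone inequality at $z_0\in B_R\times B_R$ and combine it with $\langle w_R-w_0,z_R-z_0\rangle\ge 0$. This gives $\|z_R-z_0\|^2\le-\langle w_0+z_0-c,\,z_R-z_0\rangle$, hence $\|z_R-z_0\|\le\|w_0+z_0-c\|$ independently of $R$. For large $R$ the point $z_R$ is therefore interior, the normal-cone term vanishes, and $c\in(I+T)(z_R)$.

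Here is what each approach buys. Your proof is elementary, but it uses finiteness of $f$ on the whole space in several places: joint continuity for Sion and for the constrained optimality conditions, and $T(z_0)\neq\emptyset$. Rockafellar's theorem also covers closed proper saddle functions with nontrivial effective domains. In that setting the lemma as literally stated actually needs such a closedness hypothesis. For the paper's application, where $f_\pm$ are smooth on $\R^2$, your restriction costs nothing. An even shorter argument would also suffice there: a continuous monotone operator defined on all of $\R^d$ is automatically maximally monotone.
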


\begin{lemma}[Monotone gradient implies convex function]\label{lem:monotoneconvex}
A function $f\in C^1$ is convex if and only if its gradient $\nabla f$ is monotone.
\end{lemma}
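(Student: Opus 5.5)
We need to prove: $f\in C^1$ is convex if and only if $\nabla f$ is monotone. The plan is to handle each direction separately, reducing everything to one-dimensional restrictions along segments.

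For the forward direction, assume $f$ is convex and $C^1$. We first establish the gradient inequality $f(z') \ge f(z) + \langle \nabla f(z), z'-z\rangle$ for all $z,z'$. To do so, we use convexity to write, for $t\in(0,1]$,
\begin{equation*}
\frac{f(z+t(z'-z)) - f(z)}{t} \le f(z') - f(z).
\end{equation*}
Letting $t\downarrow 0$ and using differentiability, the left side tends to $\langle \nabla f(z), z'-z\rangle$, which gives the inequality. We then write the same inequality with the roles of $z$ and $z'$ swapped and add the two. The function values cancel, leaving $\langle \nabla f(z) - \nabla f(z'), z - z'\rangle \ge 0$, i.e., monotonicity.

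For the converse, assume $\nabla f$ is monotone. Fix $z,z'$ and set $\varphi(t) = f(z + t(z'-z))$ on $[0,1]$. Since $f\in C^1$, $\varphi$ is $C^1$ with $\varphi'(t) = \langle \nabla f(z+t(z'-z)), z'-z\rangle$. For $s<t$, the difference of the two points is $(t-s)(z'-z)$, so monotonicity gives
\begin{equation*}
(t-s)\bigl(\varphi'(t)-\varphi'(s)\bigr) = \langle \nabla f(z+t(z'-z)) - \nabla f(z+s(z'-z)), (t-s)(z'-z)\rangle \ge 0.
\end{equation*}
Hence $\varphi'$ is nondecreasing. A $C^1$ function of one variable with nondecreasing derivative is convex. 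To see this without citing the fact, apply the mean value theorem on $[0,\lambda]$ and $[\lambda,1]$: this yields $\varphi(\lambda) - \varphi(0) \le \lambda \varphi'(\lambda)$ and $\varphi(1)-\varphi(\lambda) \ge (1-\lambda)\varphi'(\lambda)$. Eliminating $\varphi'(\lambda)$ between these gives $\varphi(\lambda) \le (1-\lambda)\varphi(0) + \lambda\varphi(1)$. Since this holds for every pair $z,z'$, $f$ is convex.

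The main step requiring care is the last one-dimensional argument, specifically the combination of the two mean value inequalities. It is routine, but the signs and weights must be tracked carefully. We also need the domain of $f$ to be convex (e.g., all of $\R^n$ or an open convex set) so that the segments lie in it; we will state this as a standing assumption.
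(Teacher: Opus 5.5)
Your proof is correct. The paper does not prove this lemma itself; it cites it as a standard fact (Bauschke--Combettes, Proposition 17.7). Your argument is the standard textbook proof: the gradient inequality plus symmetrization gives convex $\Rightarrow$ monotone, and restriction to segments, monotonicity of $\varphi'$, and the mean value theorem give the converse.

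Two minor remarks. First, your argument uses only differentiability of $f$, not continuity of $\nabla f$. Second, your standing assumption of an open convex domain costs nothing here, since the paper applies the lemma to $\Phi$ on all of $\R^2$.
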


\begin{lemma}[Bimonotone implies skew]\label{lem:skew}
Let $F:\R^d\rightarrow \R^d$. If $F$ and $-F$ are both monotone,
then there exist a skew-symmetric linear map $\bm A$ and vector $b$ such that $F(z) = \bm A z + b$.
\end{lemma}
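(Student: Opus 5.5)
The statement just above is Lemma~\ref{lem:skew}: if $F$ and $-F$ are both monotone, then $F(z)=\bm A z+b$ with $\bm A$ skew-symmetric. The plan is to first show that $F$ is affine, and then read off skewness from monotonicity of both signs.

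\textbf{Step 1: the basic identity.} Monotonicity of $F$ and of $-F$ together give
\[
\langle F(z)-F(z'),z-z'\rangle=0 \qquad\text{for all } z,z'.
\]
Put $G(z)=F(z)-F(0)$, which still satisfies this identity. Taking $z'=0$ gives $\langle G(z),z\rangle=0$ for every $z$. Expanding the identity for $G$ and using this then yields
\[
\langle G(z),z'\rangle+\langle G(z'),z\rangle=0 \qquad\text{for all } z,z'. \tag{$*$}
\]

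\textbf{Step 2: $G$ is linear.} Fix $z'$. The map $z\mapsto\langle G(z),z'\rangle=-\langle G(z'),z\rangle$ is linear in $z$. Since this holds for every $z'$, every coordinate functional of $G$ is linear, so $G$ itself is linear.

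This is the key step, and it needs no continuity assumption. I would write it out explicitly. For scalars $\alpha,\beta$ and vectors $u,v$, we have for every $w$:
\[
\langle G(\alpha u+\beta v),w\rangle=-\langle G(w),\alpha u+\beta v\rangle=\alpha\langle G(u),w\rangle+\beta\langle G(v),w\rangle .
\]
Hence $G(\alpha u+\beta v)=\alpha G(u)+\beta G(v)$.

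\textbf{Step 3: skewness.} Write $G(z)=\bm A z$ with $\bm A$ a matrix. Then $(*)$ reads $\langle (\bm A+\bm A^\top)z,z'\rangle=0$ for all $z,z'$, so $\bm A^\top=-\bm A$. Setting $b=F(0)$ gives $F(z)=\bm A z+b$, as claimed.

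The only place needing care is deriving $(*)$ cleanly; once it is in hand, the rest is immediate.
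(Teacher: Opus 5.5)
Your proof is correct and complete. The two monotonicity inequalities give $\langle F(z)-F(z'),z-z'\rangle=0$. Normalizing to $G=F-F(0)$ gives $\langle G(z),z\rangle=0$, and from that $(*)$. Testing $G(\alpha u+\beta v)$ against an arbitrary $w\in\R^d$ and applying $(*)$ twice yields linearity with no continuity assumption. Finally, $(*)$ for $G(z)=\bm A z$ forces $\bm A^\top=-\bm A$.

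The paper does not prove \cref{lem:skew} itself. It records it as a known fact and defers to \citep[Proposition 1.3]{bianchi2003pseudomonotone}, so the only difference is that your argument is self-contained. The citation keeps the note short. Your argument makes explicit that no regularity of $F$ is needed. The only structural input is that $F$ is defined on all of $\R^d$, since Step 2 tests against every $w$ and every combination $\alpha u+\beta v$. That is exactly the situation in which the paper invokes the lemma, for $S_+$ on $\R^2$.
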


\section{Main Result}\label{sec:main}
Consider the functions 
\begin{align*}
f_+ (x,y) = \frac{1}{2}x^2 +\sin x \sin y -\frac{1}{2} y^2,  \qquad f_-(x,y) = \frac{1}{2}x^2 -\sin x \sin y -\frac{1}{2}y^2.
\end{align*}
Since both functions are convex-concave\footnote{This can be checked by examining second derivatives. For example, $\partial^2 f_{+} / \partial x^2 = 1 - \sin x \sin y \geq 0$ and $\partial^2 f_{+} / \partial y^2 = -1 - \sin x \sin y \leq 0$, and similarly for $f_-$.}, the corresponding saddle operators
$$F_+(x,y) = (x +\cos x  \sin y , y - \sin x \cos y   ),\ F_-(x,y) = (x - \cos x  \sin y , y + \sin x \cos y    )$$
are maximally monotone by~\cref{lem:saddle-operator}.
 Denote the Asplund decompositions of $F_+, F_-$ by
\begin{equation}\label{eq:counterexampleasplund}
    F_+ = \nabla g_+ + S_+, \qquad F_- = \nabla g_- + S_-.
\end{equation}

\begin{theorem}[Main result]
The class of acyclic monotone operators is not closed under addition. Specifically, $S_+$, and $S_-$, as defined in~\eqref{eq:counterexampleasplund}, are both acyclic monotone operators, yet $S_+ + S_-$ is not acyclic.
\end{theorem}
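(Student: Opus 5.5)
Acyclicity of $S_+$ and $S_-$ is immediate from Asplund's theorem as recalled in the introduction: $F_\pm$ is single-valued and maximally monotone (by \cref{lem:saddle-operator}), so its Asplund decomposition \eqref{eq:counterexampleasplund} exists and, by definition, its acyclic part $S_\pm$ is acyclic monotone. All the work is therefore in showing that $S_+ + S_-$ is not acyclic. My plan is to exhibit a decomposition $S_+ + S_- = \nabla h + T$ with $h$ convex and nonaffine and $T$ monotone.

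The key observation is that the sum $F_+ + F_- = (2x, 2y) = \nabla(x^2+y^2)$ is a gradient. Hence
\begin{equation*}
S_+ + S_- = \nabla\bigl(x^2 + y^2 - g_+ - g_-\bigr).
\end{equation*}
Set $h := x^2+y^2 - g_+ - g_-$. Then $S_++S_-$ is monotone as a sum of monotone operators, and it equals $\nabla h$. Provided $h$ is $C^1$, \cref{lem:monotoneconvex} shows that $h$ is convex. The decomposition $S_++S_- = \nabla h + 0$ then shows that $S_++S_-$ fails to be acyclic unless $\nabla h$ is constant. Regularity should be harmless. Since $S_\pm = F_\pm - \nabla g_\pm$ and $S_++S_-$ is single-valued and continuous, $\nabla h$ is continuous and single-valued. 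Alternatively, I can note that $h$ is differentiable with gradient equal to a continuous map, so $h\in C^1$; if the $g_\pm$ are merely convex, I would use the fact that single-valued subdifferentials of finite convex functions are continuous.

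The main obstacle is ruling out that $\nabla h$ is constant, i.e.\ that $S_++S_-$ is constant. Suppose $S_+ + S_- \equiv c$. Then $S_- = c - S_+$. Both $S_+$ and $S_-$ are monotone, so $S_+$ and $-S_+$ are both monotone. By \cref{lem:skew}, $S_+(z) = \bm A z + b$ with $\bm A$ skew-symmetric. Then $\nabla g_+ = F_+ - \bm A z - b$, so the symmetric part of the Jacobian of $F_+$ must equal the Hessian of $g_+$, and the antisymmetric part of $DF_+$ must be the constant $\bm A$.

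I would then compute $DF_+$ directly. Its off-diagonal entries are $\partial_y(x+\cos x\sin y) = \cos x\cos y$ and $\partial_x(y-\sin x\cos y) = -\cos x\cos y$. The antisymmetric part is therefore $\cos x\cos y\,\bigl(\begin{smallmatrix}0&1\\-1&0\end{smallmatrix}\bigr)$, which is not constant. This contradicts the conclusion of the previous step, so $S_++S_-$ is nonconstant and hence not acyclic. Making this Jacobian argument rigorous uses only that a $C^1$ gradient field has a symmetric Jacobian, and $F_+$ is smooth, so $\nabla g_+ = F_+ - \bm A z - b$ is smooth too.
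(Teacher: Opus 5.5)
Your proposal is correct and follows essentially the same route as the paper: you write $S_++S_-=\nabla(x^2+y^2-g_+-g_-)$, get convexity from monotonicity via \cref{lem:monotoneconvex}, and rule out constancy by using \cref{lem:skew} to force $S_+$ to be skew-affine, which contradicts the integrability of $F_+-S_+$. Your observation that the antisymmetric part of $DF_+$, namely $\cos x\cos y\,\bigl(\begin{smallmatrix}0&1\\-1&0\end{smallmatrix}\bigr)$, is nonconstant is the same computation as the paper's mismatched mixed partials, and your regularity remarks make explicit a $C^1$ point that the paper leaves implicit.
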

\begin{proof}
By construction, $S_+$ and $S_-$ are the acyclic parts of the decompositions in~\eqref{eq:counterexampleasplund}. Below we show that $S_+ + S_-$ admits the decomposition $S_+ + S_- = \nabla \Phi +0 $ for some non-affine convex function $\Phi$; by definition this implies that $S_+ + S_-$ is not acyclic.

We first show that $S_+ + S_-$ is the gradient of a convex function. Adding the two decompositions in~\eqref{eq:counterexampleasplund} gives
\begin{equation*}
(2x,2y) = F_+(x,y) + F_-(x,y) = \nabla g_+ + \nabla g_- + S_+ + S_-.
\end{equation*}
The map $(2x,2y)$ is the gradient of $h(x,y)=x^2+y^2$, so $S_+ + S_- = \nabla \Phi$ is the gradient of a function $\Phi = h - g_+ - g_-$.  Moreover, since $\nabla \Phi = S_+ + S_-$ is the sum of two monotone operators, it is monotone, hence $\Phi$ is convex by~\cref{lem:monotoneconvex}.

It remains to show that $\Phi$ is not affine. Assume for the sake of contradiction that it is, i.e., that $\nabla \Phi=c$ for some constant $c\in \R^2$. 
Because $S_-$ is monotone and $S_- = c-S_+$, it follows that $-S_+$ is monotone (adding a constant does not affect monotonicity). Since $S_+$ and $-S_+$ are both monotone,~\cref{lem:skew} implies that $S_+$ is skew-affine, i.e., that $S_+(x,y) = (ay+b_1,\,-ax+b_2)$
 for some constants $a,b_1,b_2 \in \R$. It follows that $\nabla g_+ = F_+ - S_+ = (x+\cos x  \sin y  - ay-b_1, y - \sin x \cos y   +ax-b_2)$. Note that the mixed partial derivatives
$$\frac{\partial^2}{\partial y \partial x}g_+ = -a+\cos x  \cos y   , \qquad \frac{\partial^2}{\partial x \partial y} g_+=a-\cos x  \cos y   ,$$
are not identical for any choice of $a$. This implies that $\nabla g_+$ is not integrable, yielding a contradiction.

\end{proof}

\begin{remark}
    The counterexample is not unique. The same argument applies if $\sin x \sin y$ is replaced by a different function $u(x,y)\in C^2$ such that $ |\partial^2 u / \partial x^2 |, |\partial^2 u / \partial y^2|\leq 1$, and $\partial^2 u / \partial x \partial y$ is nonconstant.
\end{remark}

\begin{remark}
    This specific operator $F_+$ was studied in the simultaneous, independent work of \citet{lee2026nesterov}. The focus of their paper is to develop faster algorithms for solving variational inequalities for monotone operators with known Asplund decompositions. In Section 2 they derive an explicit Asplund decomposition of $F_+$ as it provides a natural example of a monotone operator that has a nonlinear acyclic component. In our paper, we study $F_+$ for the different purpose of answering the aforementioned question of~\citet{borwein2007asplund}.
\end{remark}

\paragraph*{Acknowledgements.} The author is grateful to Jason Altschuler for helpful comments and suggestions.
\small
\bibliographystyle{plainnat}
\bibliography{refs}
\normalsize

\end{document}